\documentclass[11pt,reqno]{amsproc}
\usepackage[margin=1in]{geometry}
\usepackage{amsmath, amsthm, amssymb, amsrefs}
\usepackage{times, esint,stackrel}
\usepackage{enumitem}
\usepackage{color}  
\usepackage[colorlinks=true]{hyperref}
\usepackage[color=yellow]{todonotes}
\usepackage{mathtools}
\usepackage{etoolbox}
\usepackage{mathrsfs}
\usepackage{accents}
\usepackage{graphicx}
\usepackage{framed,xfrac}

\makeatletter
\patchcmd\@thm
  {\let\thm@indent\indent}{\let\thm@indent\noindent}%
  {}{}
\makeatother

\usepackage{etoolbox}
\expandafter\patchcmd\csname\string\proof\endcsname
  {\normalparindent}{0pt }{}{}

\newtheorem*{thm*}{Theorem}

\newcommand{\be}{\begin{equation}}
\newcommand{\ee}{\end{equation}}
\newcommand{\bea}{\begin{eqnarray}}
\newcommand{\eea}{\end{eqnarray}}

\newtheorem{thm}{Theorem}
\newtheorem{prop}{Proposition}
\newtheorem{defn}{Definition}
\newtheorem{lemma}{Lemma}

\newtheorem{example}{Example}
\theoremstyle{definition}

\newcommand{\rmd}{{\rm d}}

\newcommand{\ol}[1]{\mkern 1.5mu\overline{\mkern-1.5mu#1\mkern-1.5mu}\mkern 1.5mu}

\def\e{{\rm e}}

\newcommand{\bq}{\begin{equation}}
\newcommand{\eq}{\end{equation}}
\newcommand{\bqa}{\begin{eqnarray*}}
\newcommand{\eqa}{\end{eqnarray*}}

\def\XXint#1#2#3{{\setbox0=\hbox{$#1{#2#3}{\int}$ }
\vcenter{\hbox{$#2#3$ }}\kern-.6\wd0}}

\title[Statistically self-similar mixing by Gaussian random fields]{Statistically self-similar mixing by Gaussian random fields}

 \author[M. Coti Zelati]{Michele Coti Zelati}
\address{Department of Mathematics, Imperial College London, London, SW7 2AZ, UK}
\email{m.coti-zelati@imperial.ac.uk}

\author[T. D. Drivas]{Theodore D. Drivas}
\address{Department of Mathematics, Stony Brook University, Stony Brook, NY, 11790}
\email{tdrivas@math.stonybrook.edu}

\author[R. S. Gvalani]{Rishabh S. Gvalani}
\address{Max-Planck-Institut für Mathematik in den Naturwissenschaften, Inselstraße 22, 04103 Leipzig}
\email{gvalani@mis.mpg.de}

\date{today}

\begin{document}
\begin{abstract}
We study the passive transport of a scalar field by a spatially smooth but white-in-time incompressible Gaussian random velocity field on $\mathbb{R}^d$.  If the velocity field $u$ is homogeneous, isotropic, and statistically self-similar, we derive an exact formula which captures non-diffusive mixing.  For zero diffusivity, the formula takes the shape of $\mathbb{E}\  \| \theta_t \|_{\dot{H}^{-s}}^2 = \e^{-\lambda_{d,s} t}  \| \theta_0 \|_{\dot{H}^{-s}}^2$  with any $s\in (0,d/2)$ and  $\frac{\lambda_{d,s}}{D_1}:= s(\frac{\lambda_{1}}{D_1}-2s)$ where $\lambda_1/D_1 = d$  is the top Lyapunov exponent  associated to the random  Lagrangian flow generated by $u$ and $ D_1$ is small-scale shear rate of the velocity.  Moreover, the mixing is shown to hold \emph{uniformly} in diffusivity.
\end{abstract}

\maketitle

\vspace{-2mm}

\section{Passive scalar transport by  Gaussian random fields}

We study passive scalar $\theta_t^\kappa(x) :\mathbb{R}^+\times \mathbb{R}^d\to \mathbb{R}$ transport with diffusivity  $\kappa\geq 0$ on $(t,x)\in \mathbb{R}^+\times \mathbb{R}^d$
\begin{align}\label{thetaeq}
\rmd \theta_t^\kappa +\rmd u_t \circ \nabla \theta_t^\kappa &= \kappa\Delta \theta_t^\kappa\ \rmd t+ \rmd f_t ,\\
\theta_t^\kappa|_{t=0}&= \theta_0,\\
\int_{\mathbb{R}^d} \theta_0\rmd x &= 0,
\end{align}
 where $ u_t:=u(x,t)$ is a white-in-time, incompressible Gaussian random field with mean and covariance
\begin{align}
\mathbb{E}\left[ {u}^i(x,t)\right] &=0,\\
\mathbb{E}\left[ {u}^i(x,t)  {u}^j(x',t')\right] &= D^{ij} (x,x') \delta (t-t'),
\end{align}
We shall consider random fields which are homogeneous and divergence-free 
\begin{align}
D^{ij} (x,x')  &:= D^{ij} (x-x'), \\
\partial_{x_i} D^{ij} (x,x') &= \partial_{x_j} D^{ij} (x,x') =0.
\end{align}
For visualization of  a Gaussian velocity whose covariance mimics inertial range turbulence, see Figure \ref{fig1}.
\begin{figure}
  \includegraphics[width=0.32\linewidth]{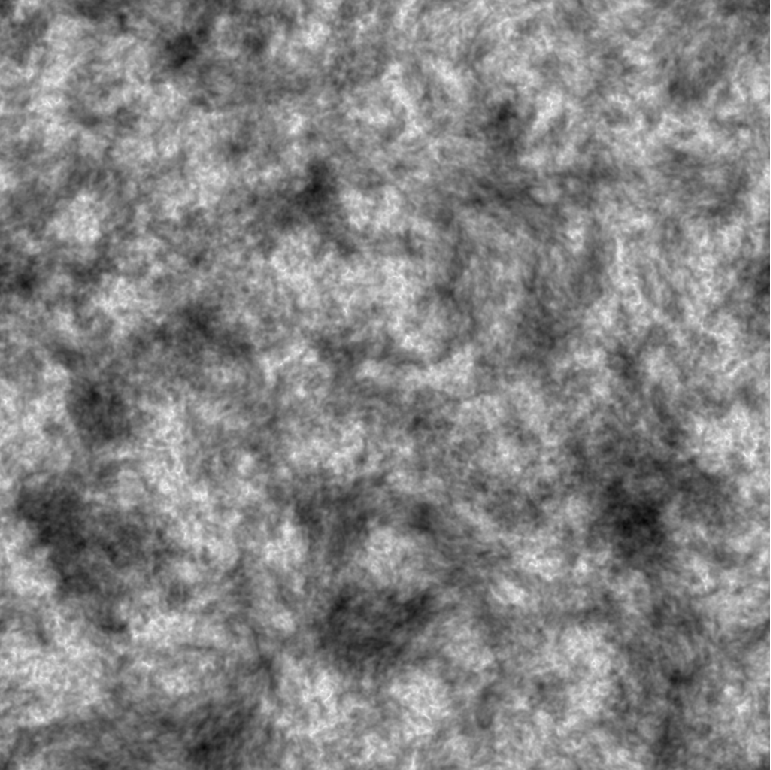}\hspace{1.5mm}
    \includegraphics[width=0.32\linewidth]{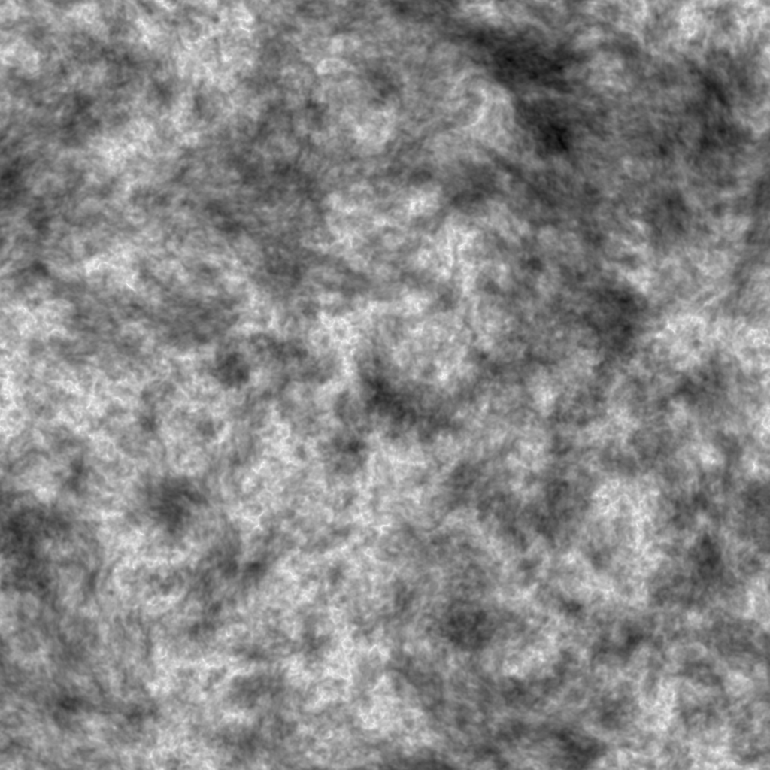} \hspace{0.5mm}
      \includegraphics[width=0.32\linewidth]{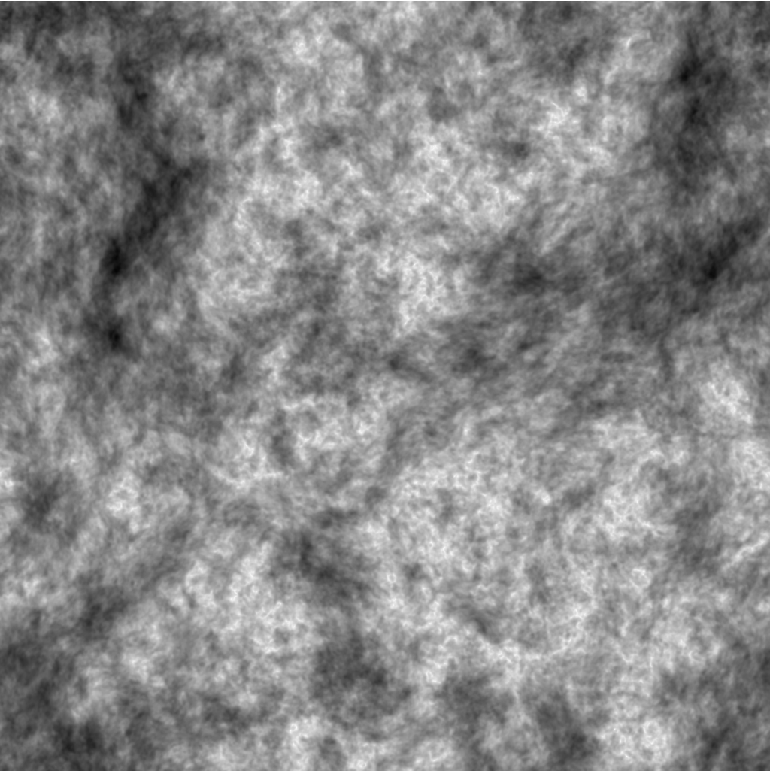}
  \caption{Snapshots in space at various times of a white-in-time Gaussian random velocity.}
  \label{fig1}
\end{figure}

The  homogenous Gaussian  forcing $f_t := f(x,t)$ is taken independent  and defined by the covariance
\be
\mathbb{E}\left[ {f}(x,t)  {f}(x',t')\right] = F (x-x') \delta (t-t'). 
\ee
Equation \eqref{thetaeq} is interpreted in the Stratonovich sense (denoted by $\circ$). The  It\^{o} form of equation  \eqref{thetaeq}  reads 
\begin{align}\label{thetaeqito}
\rmd \theta_t^\kappa +\rmd u_t\cdot \nabla \theta_t^\kappa &=\tfrac{1}{2} D(0) :\nabla \otimes \nabla \theta_t^\kappa \rmd t + \kappa\Delta \theta_t^\kappa\ \rmd t+ \rmd f_t ,\\
\theta_t^\kappa|_{t=0}&= \theta_0.
\end{align}
The setup is called the Kraichnan model \cite{Kraichnan68,FGV01, CFG08}, reviewed in the wonderful lecture notes of Gaw\c{e}dzki \cite{gawedzki1997turbulence,gawedzki1999easy,gawedzki2000soluble}.
The velocity  field can be written concretely as
\be\label{traj}
\rmd u_t(x)=  \sum_{n=0}^\infty\sigma^{(n)}(x) \rmd W^{(n)}_t, \qquad \sigma^{(n)}(x) := \sqrt{\lambda_n} e_n(x),
\ee
 where $W^{(n)}_t$ are independent one-dimensional  Brownian motions and where $\lambda_n$ and $e_n(x)$ for $n=0,1,2,\dots$ are the eigenvalues and eigenfunctions of the positive, trace-class operator with kernel $D (x-x')$ acting on $L^2(\mathbb{R}^d, \mathbb{R}^d)$.  The kernel $D$ can be represented by
\be\label{dform}
D^{ij}(x-x')=\sum_{n=0}^\infty \sigma^{(n)}_i(x) \sigma^{(n)}_j(x').
\ee
  Here the $\sigma^{(n)}$ are divergence-free vector fields since $D$ is divergence-free in each index.  We will consider a model of the viscous--convective range for the scalar in which the velocity if Lipschitz, the so-called Batchelor regime for the model.  The rough version of the model, mimicking a turbulent inertial range, has been a topic of a great deal of study \cite{BGK98,LJR02,LJR04}.  See also discussion in \cite{eyink2022kraichnan,eyink2022high} regarding the effects of molecular fluctuations on the phenomenology of scalar advection below the viscous--convective range.  In this work, we will specialize our covariance even further
  
\begin{defn}[Self-similar isotropic covariance]
We say $D$ is self-similarly isotropic if it takes the form
\begin{align}\label{asymexpD2}
 D(0)-D(r) 
&=D_1\left[I+ \left( \frac{2}{d-1}\right)\left(I- \hat{r}\otimes \hat{r}\right)\right]|r|^2
\end{align}
for some constant\footnote{The physical dimensions of $D_1$ are inverse time and it can be regarded as a proxy for the shear rate 
at small scales. See the discussion of Kraichnan, e.g.  \cite[Eq. (3.5)]{Kraichnan68}.} $D_1>0$ and  $D(0)= D_0I$ for $D_0>0$.
\label{def:self-similar}
\end{defn}

\begin{example}
A prototypical example of a self-similarly isotropic random field arises in the following context.
Consider a Gaussian field defined by the covariance 
\be
D_{ij}(r) = \bar{D}_0 \int  \frac{P_{ij}(k)}{(|k|+m^2)^{(d-\zeta)/2}} \e^{i k\cdot r} \rmd k 
\ee
where $\zeta>2$. The constant $m$ is an infrared  cutoff for the  velocity and $P_{ij}$ is  the projection onto the divergence-free subspace.    At short distances, one can compute that
\be
D_0\delta^{ij}-D^{ij}(r) 
=D_1\left[\delta^{ij}+ \left( \frac{2}{d-1}\right)\left(\delta^{ij}- \hat{r}^i\hat{r}^j\right)\right]r^2 + O((m|r|)^2),
\ee
where 
\be\label{D0D1const}
{D}_0 =  \frac{\ol{D}_0}{m^2 } \frac{d-1}{d (4\pi)^{d/2}  \Gamma \left(\frac{d+2}{2}\right)} , \qquad {D}_1 =  \ol{D}_0 \frac{d-1}{(d+2)(4\pi)^{d/2}  \Gamma \left(\frac{d+2}{2}\right) } .
\ee
Thus, taking the infrared limit $m\to 0$ gives a self-similarly isotropic random field.
\end{example}

    The forcing in \eqref{thetaeq} is also represented by a statistically independent collection of one-dimensional independent Brownian motions $\{B^{(k)}_t\}_{k\in \mathbb{N}}$ and of scalar functions $\{ q^{(k)}\}_{k\in \mathbb{N}}$,
as 
\be
\rmd f_t(x)= \sum_{k=0}^\infty q^{(k)} (x)  \rmd B^{(k)}_t, \qquad q^{(k)}(x) := \sqrt{\mu_k} u_k(x),
\ee
 where $B^{(k)}_t$ are independent Brownian motions and where $\mu_k$ and $u_k(x)$ for $k=0,1,2,\dots$ are the eigenvalues and eigenfunctions of the positive, trace-class operator with kernel $F (x-x')$ acting on $L^2(\mathbb{R}^d, \mathbb{R})$.
We denote averages over the random velocity by $\mathbb{E}^u[\cdot]$, averages over the forcing by $\mathbb{E}^f[\cdot]$  and averages over both simply by $\mathbb{E}[\cdot]$.

In this note, we are interested in the mixing properties of Gaussian random fields. In the context of stochastic transport, exponential mixing estimates have been obtained in  \cites{BBPS22,BBPS21} for velocities generated by the stochastic Navier-Stokes equations, 
in  \cite{GY21} for Kraichnan-type models with noise satisfying a general Hörmander condition, and in \cites{BCZG23,Cooperman22} for alternating shear flows with either random phases or random switching times. On the other hand, deterministic constructions of exponentially mixing flows can be found in \cites{ELM23,ACM19,YZ17,EZ19}. The purpose of this note is to give a new and simple, yet explicit and quantitative  proof of mixing for \eqref{thetaeq}. 
Our main result, which quantifies mixing in terms of negative Sobolev norms \cite{oakley2021mix}, is

\begin{thm}[Scalar Mixing Identity]\label{thm1}
Suppose that  $D$ is an incompressible, homogeneous and self-similarly isotropic correlation function  \eqref{asymexpD2}.
Then we have the identity
\be\label{eq:mixidentity}
\mathbb{E}  \| \theta^\kappa_t \|_{\dot{H}^{-s}}^2 = \e^{-\lambda_{d,s} t}  \| \theta_0 \|_{\dot{H}^{-s}}^2 + \frac{F_s}{\lambda_{d,s}}
 \Big(1-  \e^{-\lambda_{d,s} t}\Big) -  \kappa \int_0^t \e^{-\lambda_{d,s}(t-\tau)}  \mathbb{E} \| \theta_\tau^\kappa \|_{\dot{H}^{1-s}}^2  \rmd \tau
\ee
where $ {\lambda}_{d,s}: =  2D_1s(d-2s)$ and $F_s=  \sum_{k=0}^\infty \| q^{(k)} \|_{\dot{H}^{-s}}^2$. 
  \end{thm}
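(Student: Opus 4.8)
The plan is to reduce the claimed identity to a scalar linear ODE for $y(t):=\mathbb{E}\,\|\theta^\kappa_t\|_{\dot{H}^{-s}}^2$ and then to integrate it by the variation-of-constants formula. Working with the It\^o form \eqref{thetaeqito}, and recalling that $D(0)=D_0 I$ so that its It\^o correction $\tfrac12 D(0){:}\na\otimes\na$ is the eddy-diffusivity operator $\tfrac{D_0}{2}\Delta$, I would apply It\^o's formula to the quadratic functional $\theta\mapsto\|\theta\|_{\dot{H}^{-s}}^2=\langle(-\Delta)^{-s}\theta,\theta\rangle$ along the solution and take expectations. The stochastic integrals against the Brownian motions $W^{(n)}_t$ and $B^{(k)}_t$ are (modulo the technical points raised below) true martingales and drop out, leaving
\be\label{sketch:ode0}
\fr{\rmd}{\rmd t}\,\mathbb{E}\,\|\theta^\kappa_t\|_{\dot{H}^{-s}}^2
= F_s \;+\; 2\,\mathbb{E}\big\langle(-\Delta)^{-s}\theta^\kappa_t,\ \big(\tfrac{D_0}{2}+\kappa\big)\Delta\theta^\kappa_t\big\rangle \;+\; \mathbb{E}\,\mathcal{I}_t ,
\ee
where $F_s=\sum_k\langle(-\Delta)^{-s}q^{(k)},q^{(k)}\rangle=\sum_k\|q^{(k)}\|_{\dot{H}^{-s}}^2$ is the It\^o correction from the forcing, the middle term is the drift contribution, and
\be\label{sketch:Idef}
\mathcal{I}_t:=\sum_{n}\big\langle (-\Delta)^{-s}\big(\sigma^{(n)}\cdot\na\theta^\kappa_t\big),\ \sigma^{(n)}\cdot\na\theta^\kappa_t\big\rangle
\ee
is the It\^o correction from the transport noise.

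The crux is to evaluate $\mathcal{I}_t$ from \eqref{sketch:Idef}. Let $G$ be the Riesz kernel of $(-\Delta)^{-s}$, i.e. $G(z)=c_{d,s}|z|^{-(d-2s)}$. Using the representation \eqref{dform}, namely $\sum_n\sigma^{(n)}_i(x)\sigma^{(n)}_j(y)=D^{ij}(x-y)$, and then integrating by parts in $x$ and in $y$ — every term in which a derivative falls on $D^{ij}$ vanishes, since $D$ is divergence-free in each index — one obtains
\be\label{sketch:I2}
\mathcal{I}_t=-\int\!\!\int \big(\pa_i\pa_j G\big)(x-y)\,D^{ij}(x-y)\,\theta^\kappa_t(x)\,\theta^\kappa_t(y)\,\rmd x\,\rmd y .
\ee
Now split $D^{ij}(z)=D_0\delta^{ij}-d^{ij}(z)$, where $d^{ij}(z):=D^{ij}(0)-D^{ij}(z)$ is the second-order structure function, which by Definition~\ref{def:self-similar}, \eqref{asymexpD2}, is isotropic and homogeneous of degree $2$ in $z$. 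The constant piece $D_0\delta^{ij}$ of \eqref{sketch:I2} produces $-D_0\!\int\!\!\int(\Delta G)(x-y)\theta(x)\theta(y)=+D_0\,\|\theta^\kappa_t\|_{\dot{H}^{1-s}}^2$, which exactly compensates the eddy-diffusivity part $\tfrac{D_0}{2}\Delta$ of the drift in \eqref{sketch:ode0}, so that after collecting the $\dot{H}^{1-s}$ contributions only the molecular diffusion survives. For the remaining piece, the key observation is that $z\mapsto d^{ij}(z)\,(\pa_i\pa_j G)(z)$ is rotation invariant and homogeneous of degree $2-\big((d-2s)+2\big)=-(d-2s)$, hence of exactly the homogeneity and symmetry of $G$ itself; it must therefore equal $-\lambda_{d,s}\,G(z)$ for a scalar $\lambda_{d,s}$. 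Carrying out this contraction explicitly — differentiating $|z|^{-(d-2s)}$ twice and pairing the result against the two tensors $\delta^{ij}|z|^2$ and $z^iz^j$ appearing in \eqref{asymexpD2} — identifies $\lambda_{d,s}=2D_1 s(d-2s)$, as claimed; thus $\mathbb{E}\,\mathcal{I}_t = D_0\,\mathbb{E}\|\theta^\kappa_t\|_{\dot{H}^{1-s}}^2 - \lambda_{d,s}\,\mathbb{E}\|\theta^\kappa_t\|_{\dot{H}^{-s}}^2$.

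Substituting back into \eqref{sketch:ode0}, the $\dot{H}^{1-s}$ terms collapse to the purely molecular contribution and one is left with the scalar linear ODE
\be\label{sketch:ode}
\fr{\rmd}{\rmd t}\,\mathbb{E}\,\|\theta^\kappa_t\|_{\dot{H}^{-s}}^2 = F_s - \lambda_{d,s}\,\mathbb{E}\,\|\theta^\kappa_t\|_{\dot{H}^{-s}}^2 - \kappa\,\mathbb{E}\,\|\theta^\kappa_t\|_{\dot{H}^{1-s}}^2 ,
\ee
whose variation-of-constants solution with initial value $\|\theta_0\|_{\dot{H}^{-s}}^2$ at $t=0$ is precisely \eqref{eq:mixidentity}. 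The abstract's zero-diffusivity formula is the case $\kappa=0$, $F_s=0$; and uniform-in-$\kappa$ mixing is an immediate consequence of \eqref{eq:mixidentity}, since its last term is manifestly $\le 0$, so $\mathbb{E}\|\theta^\kappa_t\|_{\dot{H}^{-s}}^2\le\e^{-\lambda_{d,s}t}\|\theta_0\|_{\dot{H}^{-s}}^2+\tfrac{F_s}{\lambda_{d,s}}(1-\e^{-\lambda_{d,s}t})$ independently of $\kappa$.

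I expect the main obstacle to be analytic rather than algebraic. First, one must set up \eqref{thetaeq} in a class of solutions for which $\theta^\kappa_t$ is well defined, It\^o's formula for $\|\cdot\|_{\dot{H}^{-s}}^2$ applies, and the stochastic integrals appearing are true martingales (not merely local); this is delicate because $\theta_0$ lies only in $\dot{H}^{-s}$ while the velocity is only spatially Lipschitz, so one presumably argues first for mollified data and noise — where $\kappa>0$ also provides instantaneous parabolic smoothing — and then removes the regularization, using \eqref{sketch:ode} together with the sign of its $\kappa$-term to obtain the uniform a priori bounds needed to pass to the limit. Second, one must justify the integration by parts leading to \eqref{sketch:I2} and read $d^{ij}(z)\,(\pa_i\pa_j G)(z)=-\lambda_{d,s}\,G(z)$ as an identity of convolution kernels tested against the two-point function $\mathbb{E}[\theta^\kappa_t(x)\theta^\kappa_t(y)]$; here it is essential that $d^{ij}(z)=O(|z|^2)$ as $z\to0$, which tames the $|z|^{-(d-2s)-2}$ singularity of $\pa_i\pa_j G$, so that no spurious Dirac mass appears at the origin and no boundary terms at spatial infinity survive. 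The remaining steps — contracting isotropic tensors and integrating a first-order linear ODE — are routine.
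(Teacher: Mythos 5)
Your proposal is correct and follows essentially the same route as the paper: It\^o's formula applied to the Riesz-kernel quadratic form $\langle\theta,I_s*\theta\rangle=\|\theta\|_{\dot H^{-s}}^2$, integration by parts using incompressibility so that the transport It\^o correction reduces to the eigenrelation $\big(D(0)-D(r)\big):\nabla_r\otimes\nabla_r I_s=-\lambda_{d,s}I_s$ with $\lambda_{d,s}=2D_1s(d-2s)$, and then variation of constants --- i.e.\ exactly the content of Lemmas \ref{keylem}, \ref{isolem} and \ref{Hm1lem}, the only cosmetic difference being that you obtain the eigenrelation by a homogeneity-plus-isotropy argument and explicit tensor contraction, whereas the paper solves the radial ODE \eqref{isoGeqn} with the ansatz $z^\alpha\ln^\beta z$. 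The one discrepancy is bookkeeping: your own intermediate algebra ($-(D_0+2\kappa)$ from the drift plus $+D_0$ from $\mathcal I_t$) actually produces a diffusive term $-2\kappa\,\mathbb{E}\|\theta^\kappa_\tau\|_{\dot H^{1-s}}^2$ rather than the $-\kappa$ you write in \eqref{sketch:ode} (the paper's displayed computation drops the same factor of $2$ when it writes $D(0)+\kappa I$), which is harmless for the mixing conclusion since that term enters only through its sign.
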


In particular, when there is no forcing ($q^{(k)}\equiv 0$), we obtain mixing in expectation $\mathbb{E}\  \| \theta_t \|_{H^{-s}}^2 \to 0$ with an exponentially fast rate, with rate uniform in the diffusivity $\kappa$.
The identity \eqref{eq:mixidentity} may look surprising, since mixing estimates typically require to pay derivatives on the initial datum to gain decay. This would indeed be the case for a pathwise estimate, since in general solutions can mix and unmix and hence a time reversal in a $\dot{H}^{-s}\mapsto \dot{H}^{-s}$ would be in conflict with a decay estimate.
The point is that  \eqref{eq:mixidentity}  is an identity \emph{in expectation}, hence the possible realizations in which growth and unmixing happen are averaged out.

\section{Adapted Mixing Identity and Proof of Theorem \ref{thm1}}

We establishes an exact balance of ``mixing norms" tailored in a particular way to the  covariance of the velocity field $D$.  A similar identity appeared recently in the work of Coghi and Maurelli, where it was used to improve the local wellposedness theory for stochastic Euler with multiplicative noise \cite{coghi2023existence}.
\begin{lemma}[Adapted Mixing Identity]\label{keylem}
Let $D$ be homogenous and divergence-free. Suppose that a function $G:\mathbb{R}^d\to\mathbb{R}$ can be found to satisfy
\be\label{Geqn}
\big(D(0)- D(r)\big) : \nabla_r \otimes \nabla_r  G = -\lambda G,
\ee
pointwise in space
for some $\lambda\in \mathbb{R}$.
Then the solution $\theta_t^\kappa$ of \eqref{thetaeq} satisfies the following ``mixing" identity,
\be
\mathbb{E}\  \langle \theta_t^\kappa, G* \theta_t^\kappa\rangle_{L^2} = \e^{-\lambda t}\langle \theta_0, G* \theta_0\rangle_{L^2} + \frac{1}{\lambda}F_G\Big(1-  \e^{-\lambda t}\Big)- \kappa \int_0^t \e^{-\lambda(t-\tau)}  \mathbb{E}\langle \nabla \theta_\tau^\kappa,   G* \nabla\theta_\tau^\kappa\rangle_{L^2} \rmd \tau
\ee
where the source is $F_G:=2\int_{\mathbb{R}^d} G(r) F(r)\rmd r  $.
  \end{lemma}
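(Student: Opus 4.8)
The plan is to compute the evolution of the bilinear quantity $Q_t^\kappa := \langle \theta_t^\kappa, G * \theta_t^\kappa\rangle_{L^2}$ using the It\^o formulation \eqref{thetaeqito}, then take expectations. First I would apply the It\^o product rule to $Q_t^\kappa$ using \eqref{thetaeqito}, which produces: (i) the martingale/Stratonovich-correction terms coming from the transport noise $\rmd u_t \cdot \nabla \theta_t^\kappa$ together with the It\^o correction $\tfrac12 D(0):\nabla\otimes\nabla\theta_t^\kappa$; (ii) the diffusive term $\kappa \Delta\theta_t^\kappa$; (iii) the cross terms with the forcing $\rmd f_t$; (iv) the quadratic variation term from the forcing $\rmd f_t$ hitting itself. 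Upon taking $\mathbb{E}$, the pure martingale terms (the Brownian increments paired against adapted integrands) vanish.

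The key step is to show that the transport contribution, after using incompressibility and translation-invariance of $D$, collapses exactly into $-\lambda\, \mathbb{E}\, Q_t^\kappa \,\rmd t$ by virtue of the eigenfunction equation \eqref{Geqn}. Concretely, writing $\theta_t \mapsto \widehat\theta_t$ in Fourier (or working directly with the kernel $G$), the advection noise contributes a term of the form $\int\int \big(D(0) - D(x-y)\big) : \nabla_x \otimes \nabla_y \big[ G(x-y) \theta_t(x)\theta_t(y)\big]\,\rmd x\,\rmd y$ after symmetrizing in $x,y$ and using $\div\sigma^{(n)} = 0$ to discard the terms where derivatives land on a single $\theta$ factor without the other. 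Changing variables to $r = x-y$ and using $\nabla_r$ acting on $G(r)$, this becomes $\langle \theta_t, \big[\big(D(0)-D(\cdot)\big):\nabla\otimes\nabla G\big] * \theta_t\rangle_{L^2}$, which by \eqref{Geqn} equals $-\lambda\, Q_t$. The forcing terms contribute, via the covariance $\mathbb{E}[\rmd f_t(x)\rmd f_t(y)] = F(x-y)\rmd t$, exactly $\sum_k \langle q^{(k)}, G*q^{(k)}\rangle_{L^2}\,\rmd t = 2\int G(r)F(r)\,\rmd r\, \rmd t = F_G\,\rmd t$ (the factor $2$ and the cross-term vanishing being bookkeeping one must track carefully). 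The diffusive term yields $-2\kappa\langle\nabla\theta_t, G*\nabla\theta_t\rangle_{L^2}\rmd t$ after an integration by parts moving one gradient across the convolution.

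Assembling these, $\mathbb{E}\, Q_t^\kappa$ satisfies the linear ODE (in integrated form)
\be
\mathbb{E}\, Q_t^\kappa = Q_0 - \lambda\int_0^t \mathbb{E}\, Q_\tau^\kappa\,\rmd\tau + F_G\, t - 2\kappa\int_0^t \mathbb{E}\langle\nabla\theta_\tau^\kappa, G*\nabla\theta_\tau^\kappa\rangle_{L^2}\,\rmd\tau,
\ee
and solving this scalar linear equation by the integrating factor $\e^{\lambda t}$ (treating the last term as an inhomogeneity) gives the claimed identity, with the constant $\tfrac{1}{\lambda}F_G(1-\e^{-\lambda t})$ arising from integrating $F_G\e^{\lambda\tau}$.

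The main obstacle I anticipate is \emph{justifying the computation rigorously} rather than the algebra itself: one must ensure the It\^o calculus is valid in the infinite-dimensional setting \eqref{traj} (interchanging the infinite sum over $n$ with expectation and with spatial integrals), that all quantities appearing — in particular $\langle\theta_t^\kappa, G*\theta_t^\kappa\rangle$ and $\langle\nabla\theta_t^\kappa, G*\nabla\theta_t^\kappa\rangle$ — are finite and integrable, and that the martingale terms are genuine martingales (not merely local ones) so that their expectation vanishes. For $\kappa > 0$ one has parabolic smoothing to lean on; for $\kappa = 0$ one likely argues by taking $\kappa\to 0$ or works with the stochastic flow representation. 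A secondary subtlety is that \eqref{Geqn} is only assumed pointwise, so one should check $G$ (and $\nabla G$) have enough decay/integrability for the convolutions against $\theta_t \in L^2$ with mean zero to make sense — this is where the mean-zero condition $\int\theta_0 = 0$ and the choice $G(r) \sim |r|^{-(d-2s)}$ (giving the $\dot H^{-s}$ norm) enter in the application to Theorem \ref{thm1}.
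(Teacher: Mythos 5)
Your proposal follows essentially the same route as the paper: It\^o's product rule applied to $\langle \theta_t^\kappa, G*\theta_t^\kappa\rangle_{L^2}$, combining the It\^o--Stratonovich correction $\tfrac12 D(0):\nabla\otimes\nabla$ with the quadratic covariation of the transport noise (using incompressibility and homogeneity) to produce $\big(D(0)-D(r)\big):\nabla_r\otimes\nabla_r G$, then invoking \eqref{Geqn}, taking expectations, and solving the resulting linear ODE by an integrating factor. One remark: the coefficient $2\kappa$ you obtain on the dissipative term is what the product rule actually yields (each of the two It\^o differentials carries a full $\kappa\Delta$), whereas the lemma as stated has $\kappa$; this factor does not affect the method or the qualitative conclusions.
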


\begin{proof}[Proof of Lemma \ref{keylem}]
First, by definition, we have
\be
 \langle \theta_t^\kappa, G* \theta_t^\kappa\rangle_{L^2} =   \int_{\mathbb{R}^d}  \theta_t^\kappa(x) \int_{\mathbb{R}^d}  G(r)  \theta_t^\kappa(x+r) \rmd r \rmd x.
\ee
Recall (leaving the sum over $n$ implicit) that
\begin{align*}
\rmd \theta_t^\kappa +  \sigma^{(n)} \cdot \nabla \theta_t^\kappa \rmd W^{(n)}_t&=\tfrac{1}{2} D(0) :\nabla \otimes \nabla \theta_t^\kappa \rmd t +\kappa\Delta \theta_t^\kappa \rmd t + q^{(n)} \rmd B^{(n)}_t,\\
\rmd (G* \theta_t^\kappa) + G* (\sigma^{(n)}\cdot \nabla\theta_t^\kappa)  \rmd W^{(n)}_t&=\tfrac{1}{2} D(0) :\nabla \otimes \nabla (G* \theta_t^\kappa) \rmd t +\kappa\Delta (G* \theta_t^\kappa) \rmd t+ (G*q^{(n)})\rmd B^{(n)}_t.
\end{align*}
Thus, by Ito's product rule, we have
\begin{align*}
\rmd  \left( \theta_t^\kappa \  G* \theta_t^\kappa\right)  &=   (G* \theta_t^\kappa) \  \rmd \theta_t^\kappa  +     \theta_t^\kappa\ \rmd(G* \theta_t^\kappa) + \rmd[   \theta_t^\kappa, G* \theta_t^\kappa]   \\
&=-  (G* \theta_t^\kappa ) \sigma^{(n)} \cdot \nabla \theta_t^\kappa \rmd W^{(n)}_t - \theta_t^\kappa (G* (\sigma^{(n)}\cdot \nabla\theta_t^\kappa) ) \rmd W^{(n)}_t\\
&\qquad + (G* \theta_t^\kappa ) q^{(n)} \rmd B^{(n)}_t + \theta_t^\kappa (G* q^{(n)} ) \rmd B^{(n)}_t\\
&\qquad \ \ + \tfrac{1}{2} (G* \theta_t^\kappa )   D(0) :\nabla \otimes \nabla \theta_t^\kappa \rmd t + \tfrac{1}{2}\theta_t^\kappa D(0) :\nabla \otimes \nabla (G* \theta_t^\kappa) \rmd t\\
&\qquad\quad + \kappa \theta_t^\kappa \Delta (G* \theta_t^\kappa)+ \kappa (G* \theta_t^\kappa)\Delta \theta_t^\kappa    +  \sigma^{(n)} \cdot \nabla \theta_t^\kappa \ (G* (\sigma^{(n)}\cdot \nabla\theta_t^\kappa)) \rmd t.
\end{align*}
Upon integrating over space and using the fact that mollification is self-adjoint in $L^2$, we have
\begin{align*}
\rmd  \langle \theta_t^\kappa, G* \theta_t^\kappa\rangle_{L^2}  &=- 2  \langle G* \theta_t^\kappa,  \sigma^{(n)} \cdot \nabla \theta_t^\kappa\rangle_{L^2}  \rmd W^{(n)}_t + 2 \langle\theta_t^\kappa ,(G* q^{(n)} )\rangle_{L^2} \rmd B^{(n)}_t\\\
&\qquad + (D(0)+ \kappa I) : \langle \theta_t^\kappa, \nabla \otimes \nabla (G* \theta_t^\kappa)\rangle_{L^2}  \\
&\qquad\quad-  \int_{\mathbb{R}^d} \int_{\mathbb{R}^d}  \theta_t^\kappa(x)  \sigma^{(n)}(x)\otimes \sigma^{(n)}(x+r) :  \nabla \otimes \nabla (G* \theta_t^\kappa) \rmd x\rmd r \rmd t\\
&\qquad\quad\quad  + 2\int_{\mathbb{R}^d}  \int_{\mathbb{R}^d} G(r) F(x,x+r)\rmd r  \rmd x  \\ 
 &=- 2  \langle G* \theta_t^\kappa,  \sigma^{(n)} \cdot \nabla \theta_t^\kappa\rangle_{L^2}  \rmd W^{(n)}_t+ 2 \langle\theta_t^\kappa ,(G* q^{(n)} )\rangle_{L^2} \rmd B^{(n)}_t \\
&\qquad +  \int_{\mathbb{R}^d} (D(0)- D(r)+ \kappa I) : \langle \theta_t^\kappa(\cdot), G(r)  (\nabla \otimes \nabla \theta_t^\kappa)(\cdot + r) \rangle_{L^2} \rmd r\rmd t \\
&\qquad\quad\quad  +2 \int_{\mathbb{R}^d}  \int_{\mathbb{R}^d} G(r) F(x,x+r)\rmd r  \rmd x.
\end{align*}
Thus, we obtain
\begin{align*}
\rmd  \langle \theta_t^\kappa, G* \theta_t^\kappa\rangle_{L^2} 
 &=- 2  \langle G* \theta_t^\kappa,  \sigma^{(n)} \cdot \nabla \theta_t^\kappa\rangle_{L^2}  \rmd W^{(n)}_t + 2 \langle\theta_t^\kappa ,(G* q^{(n)} )\rangle_{L^2} \rmd B^{(n)}_t\\
&\qquad +  \int_{\mathbb{R}^d} (D(0)- D(r)+ \kappa I) :(\nabla_r \otimes \nabla_r G)(r)  \langle \theta_t^\kappa(\cdot), \theta_t^\kappa(\cdot + r) \rangle_{L^2} \rmd r\rmd t\\
&\qquad\quad\quad  + 2\int_{\mathbb{R}^d}  \int_{\mathbb{R}^d} G(r) F(x,x+r)\rmd r  \rmd x.
\end{align*}
In the above calculation, we repeatedly made use of the fact that the $\sigma^{(n)}$ are divergence-free, integrated by parts and changed some $x$ to $r$ derivatives. 
Appealing to the defining equation  \eqref{Geqn} for the kernel $G$, we find 
\begin{align}\nonumber
\rmd  \langle \theta_t^\kappa, G* \theta_t^\kappa\rangle_{L^2}  &= -\lambda \langle \theta_t^\kappa, G* \theta_t^\kappa\rangle_{L^2}   - 2  \langle G* \theta_t^\kappa,  \sigma^{(n)} \cdot \nabla \theta_t^\kappa\rangle_{L^2}  \rmd W^{(n)}_t+ 2 \langle\theta_t^\kappa ,(G* q^{(n)} )\rangle_{L^2} \rmd B^{(n)}_t+ F_G\\
&\qquad + \kappa \langle \Delta \theta_t^\kappa,  G* \theta_t^\kappa\rangle_{L^2}, \label{eq:pathwiseH-s}
\end{align}
where we have introduced the notation $F_G$ from the statement of the lemma. The result follows upon taking expectation, using the mean zero property of the martingale term and subsequently integrating.
\end{proof}

We now demonstrate a solution $G$ to \eqref{Geqn}  if it covariance function is self-similarly isotropic.

\begin{lemma}\label{isolem}
Suppose that  $D$ is self-similarly isotropic. Then for any $s\in (0,d/2)$, the Riesz potential 
\begin{equation}\label{Riesz}
G(r)= I_s(|r|), \qquad  I_s(\rho)=\frac{1}{c_{d,s}} \frac{1}{\rho^{d-2s}}, \qquad c_{d,s}=\pi^{d/2}2^{2s} \frac{\Gamma(s)}{\Gamma((d-2s)/2)}
\end{equation}
solves equation  \eqref{Geqn}   with $\lambda=  2D_1s(d-2s)$. 
\end{lemma}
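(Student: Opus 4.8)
The plan is a direct pointwise computation away from the origin, using only the radial structure of the Riesz kernel and a couple of elementary tensor contractions. Write $\rho=|r|$ and $\hat r = r/\rho$, and set $\alpha := d-2s$, so that $\alpha\in(0,d)$ and $G(r)=c_{d,s}^{-1}\rho^{-\alpha}$ is radial. First I would record the standard formula for the Hessian of a radial function,
\[
(\nabla_r\otimes\nabla_r G)(r) = G''(\rho)\,\hat r\otimes\hat r + \frac{G'(\rho)}{\rho}\big(I - \hat r\otimes\hat r\big),
\]
together with $G'(\rho) = -c_{d,s}^{-1}\alpha\,\rho^{-\alpha-1}$ and $G''(\rho) = c_{d,s}^{-1}\alpha(\alpha+1)\,\rho^{-\alpha-2}$.

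Next I would substitute the self-similarly isotropic form \eqref{asymexpD2}, which I rewrite as $D(0)-D(r) = D_1\rho^2\big(a\,I - b\,\hat r\otimes\hat r\big)$ with $a = \tfrac{d+1}{d-1}$ and $b = \tfrac{2}{d-1}$. Contracting this tensor against the Hessian above reduces to the identities $I:\hat r\otimes\hat r = 1$, $I:(I-\hat r\otimes\hat r) = d-1$, $\hat r\otimes\hat r:\hat r\otimes\hat r = 1$, and $\hat r\otimes\hat r:(I-\hat r\otimes\hat r) = 0$. These collapse the coefficient of $G''$ to $a-b = 1$ and the coefficient of $G'/\rho$ to $a(d-1) = d+1$, giving
\[
\big(D(0)-D(r)\big):\nabla_r\otimes\nabla_r G = D_1\rho^2\Big(G''(\rho) + (d+1)\frac{G'(\rho)}{\rho}\Big).
\]
Inserting the explicit $G'$ and $G''$, the right-hand side equals $D_1\rho^2\cdot c_{d,s}^{-1}\alpha\,\rho^{-\alpha-2}\big[(\alpha+1)-(d+1)\big] = D_1\,\alpha(\alpha-d)\,G(r)$. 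Since $\alpha = d-2s$ we have $\alpha(\alpha-d) = -2s(d-2s)$, so the left-hand side is $-2D_1 s(d-2s)\,G$, which is precisely \eqref{Geqn} with $\lambda = 2D_1 s(d-2s)$.

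There is essentially no hard step here; the only things to be careful about are the bookkeeping of the anisotropic term $(I-\hat r\otimes\hat r)$ — where the cancellations $a-b=1$ and $a(d-1)=d+1$ are what produce the clean answer — and the behaviour at $r=0$. On the latter point I would add a short remark: because $D(0)-D(r)$ vanishes quadratically at the origin while the Hessian of $G$ is $O(\rho^{-\alpha-2})$, the left-hand side of \eqref{Geqn} is $O(\rho^{-\alpha})$, which is locally integrable precisely because $s\in(0,d/2)$ forces $\alpha<d$; hence the identity, derived pointwise on $\mathbb{R}^d\setminus\{0\}$, also holds in the sense of distributions, with no spurious delta contribution at the origin. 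Finally, the normalisation constant $c_{d,s}$ is irrelevant to solving the homogeneous equation \eqref{Geqn}; it is fixed only so that $G$ is the standard Riesz kernel, i.e. so that $\langle\theta, G*\theta\rangle_{L^2} = \|\theta\|_{\dot H^{-s}}^2$, which is what allows Lemma \ref{keylem} to be specialised into the statement of Theorem \ref{thm1}.
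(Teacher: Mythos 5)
Your proof is correct and follows essentially the same route as the paper's: the radial Hessian formula plus the two tensor contractions reduce \eqref{Geqn} to the same Euler-type ODE $\rho^2\mathcal{G}''+(d+1)\rho\mathcal{G}'+\tfrac{\lambda}{D_1}\mathcal{G}=0$, which the paper solves with a general ansatz $z^\alpha\ln^\beta z$ (for arbitrary scaling exponent $\zeta$) while you simply verify the specific power $\rho^{2s-d}$. Your added remark on local integrability and the absence of a delta contribution at the origin is a correct point that the paper leaves implicit.
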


\begin{proof}[Proof of Lemma \ref{isolem}]  We will prove something more general here by studying instead the covariance
\begin{align}\label{asymexpD}
 D(0)-D(r) 
&=D_1\left[I+ \left( \frac{\zeta}{d-1}\right)\left(I- \hat{r}\otimes \hat{r}\right)\right]|r|^\zeta
\end{align}
with the statement of the lemma following as a corollary with $\zeta=2$.  Let $z=|r|$. 
We seek a radial kernel $G(r):= \mathcal{G}(z)$. On such a function, we have
\begin{align*}
\nabla_r\otimes \nabla_r  {G}&= \nabla_r \otimes (\hat{r}  \mathcal{G}'(z) )= \frac{1}{z} ( I - \hat{r}\otimes \hat{r}) \mathcal{G}'(z)  +  \hat{r}\otimes \hat{r}\mathcal{G}''(z).
\end{align*}
Note that $( I - \hat{r}\otimes \hat{r}) :I=( I - \hat{r}\otimes \hat{r}) :( I - \hat{r}\otimes \hat{r}) =d- 1$ so
\begin{align*}
\big(D(0)- D(r)\big) :( I - \hat{r}\otimes \hat{r})&= (d-1 + \zeta )D_1 |r|^\zeta,  \\
\big(D(0)- D(r)\big) : \hat{r}\otimes \hat{r} &= D_1 |r|^\zeta.
\end{align*}
Thus we have
\begin{align*}
\big(D(0)- D(r)  \big) : \nabla_r \otimes \nabla_r  \mathcal{G}&=  D_1 \Big[ ((d-1) + \zeta)z\mathcal{G}'(z)+  z^2\mathcal{G}''(z)\Big]z^{\zeta-2}.
\end{align*}
Thus, equating this to $ -\lambda \mathcal{G}$, we find that the kernel must solve
\be\label{isoGeqn}
z^2\mathcal{G}''(z)+ ((d-1) + \zeta)z \mathcal{G}'(z)+ \frac{\lambda}{D_1}z^{2-\zeta}\mathcal{G}(z)= 0.
\ee 
We seek a solution of the type $\mathcal{G}(z)= z^{\alpha}\ln^\beta(z)$.  Demanding the ansatz be a solution, we require
\begin{align*}
0&= \Bigg[ \alpha^2  + \Big(d+\zeta-2 \Big) \alpha + \frac{\lambda}{D_1}z^{2-\zeta}\Bigg]\ln^2(z) +\beta \Big(d+\zeta-2 + 2\alpha\Big)\ln(z) +  \beta(\beta-1) .
\end{align*}
Note that for this ansatz to be a solution, we require $\beta=0$ or $\beta=1$. Thus, we have the cases
\begin{enumerate}
\item[(a)] If $\lambda=0$, then 
\be
\alpha = 2-d-\zeta, \qquad  \beta=0,
\ee
giving a statistically conserved quantity.
\item[(b)]  If  $\beta=0$ and $\lambda\neq 0$ then 
\be
\zeta=2, \qquad \alpha =-\frac{d}{2}   \pm \sqrt{\left(\frac{d}{2} \right)^2- \frac{\lambda}{D_1}},
\ee
provided that $\tfrac{\lambda}{D_1}\leq \left(\frac{d}{2}\right)^2$. 
\item[(c)]  If $\beta=1$  then 
\be
\zeta=2, \qquad \alpha =-\frac{d}{2} , \qquad \frac{\lambda}{D_1}=\left(\frac{d}{2} \right)^2.
\ee
\end{enumerate} 
The stated conclusion follows easily as a special case of (b).
\end{proof}

Recall the following characterization of the $H^{-s}$ semi-norm

\begin{lemma}[\cites{Silvestre07,Landkof72}]\label{Hm1lem}
Let $d\geq 2$, $s\in (0,d/2)$. Then $ \langle h, I_s* h\rangle_{L^2} = \|h \|_{\dot{H}^{-s}}^2$ where  $I_s$ is the Riesz potential.
\end{lemma}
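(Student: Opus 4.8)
The plan is to pass to the Fourier side, where the identity collapses to the single statement that the Riesz kernel $I_s$ acts as the multiplier $|k|^{-2s}$, with the constant $c_{d,s}$ in \eqref{Riesz} being exactly the normalization that arranges this. Writing the seminorm through its Fourier representation and using that $h$ is real-valued, I would combine Plancherel with the convolution theorem to get
\be
\langle h, I_s* h\rangle_{L^2} = \int_{\mathbb{R}^d} \widehat{I_s}(k)\, |\widehat{h}(k)|^2\, \rmd k,
\ee
up to the fixed $2\pi$-factors dictated by the chosen transform convention. Everything then reduces to showing $\widehat{I_s}(k) = |k|^{-2s}$, equivalently $\widehat{|x|^{-(d-2s)}}(k) = c_{d,s}\, |k|^{-2s}$, after which substitution into the display and comparison with the Fourier representation of $\|h\|_{\dot H^{-s}}^2$ finishes the proof.

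To identify the transform of the kernel, I would first argue structurally. Because $s\in(0,d/2)$ gives $0<d-2s<d$, the function $|x|^{-(d-2s)}$ is locally integrable at the origin and defines a tempered distribution that is rotation invariant and homogeneous of degree $-(d-2s)$; its Fourier transform is therefore rotation invariant and homogeneous of degree $-2s$, hence of the form $c\,|k|^{-2s}$ for a single constant $c=c(d,s)$. To pin the constant, I would pair both homogeneous distributions against the Gaussian $g(x)=\e^{-|x|^2/2}$, which is its own transform up to convention. Reducing each side to a radial integral of the form $\int_0^\infty \rho^{\beta}\e^{-\rho^2/2}\,\rmd\rho = 2^{(\beta-1)/2}\Gamma(\tfrac{\beta+1}{2})$ --- with $\beta=2s-1$ on one side and $\beta=d-2s-1$ on the other --- the angular factors cancel in the ratio and one reads off $c$ as a quotient of Gamma functions. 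Matching this against $c_{d,s}=\pi^{d/2}2^{2s}\Gamma(s)/\Gamma((d-2s)/2)$ then yields $\widehat{I_s}(k)=|k|^{-2s}$.

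The main obstacle is purely the bookkeeping of the constant. The homogeneity argument forcing the multiplier to be a scalar multiple of $|k|^{-2s}$ is routine, but the scalar itself, together with the placement of the $2\pi$ factors in Plancherel, in the convolution theorem, and in the normalization of the $\dot H^{-s}$ seminorm, must all be tracked consistently so that the net multiplier is exactly $|k|^{-2s}$ and not $|k|^{-2s}$ times a stray power of $2\pi$. Concretely, the two radial Gaussian integrals must be combined through the reflection and duplication identities for $\Gamma$ to reproduce $c_{d,s}$ on the nose; this is the one step where factor errors would propagate, and it is precisely the content borrowed from the classical references \cites{Silvestre07,Landkof72}.
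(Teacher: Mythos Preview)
The paper does not supply a proof of this lemma at all; it simply states the result and cites \cites{Silvestre07,Landkof72}, then moves on. Your sketch is the standard argument and is correct: the Riesz kernel $I_s$ has Fourier transform $|k|^{-2s}$ (with $c_{d,s}$ chosen precisely to make this so), and Plancherel together with the convolution theorem then gives $\langle h, I_s*h\rangle_{L^2}=\int |k|^{-2s}|\widehat h(k)|^2\,\rmd k=\|h\|_{\dot H^{-s}}^2$. Your homogeneity-plus-Gaussian-pairing computation of the constant is exactly how the cited references establish the transform of the Riesz kernel, so you are in effect reproducing their proof rather than offering a genuinely different route; the only caution, which you already flag, is that the constant $c_{d,s}$ and the $2\pi$ conventions must be matched consistently.
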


With this, Theorem \ref{thm1} follows  immediately from Lemmas \ref{keylem}, \ref{isolem} and \ref{Hm1lem}.

\subsection{Two-particle dispersion}
We now show that similar arguments as those used in Lemma~\ref{keylem} can be used to prove a kind of decay estimate for the law of the two-point Lagrangian flow associated to~\eqref{thetaeq}, i.e.
\begin{align}
\rmd X_t(x) = \sum_{n=0}^\infty \sigma^{(n)}(X_t(x)) \circ  \rmd W_t^{(n)} \qquad 
\rmd X_t(y)= \sum_{n=0}^\infty \sigma^{(n)}(X_t(y)) \circ  \rmd W_t^{(n)} \, ,
\end{align}
with $X_0(x)=x,X_0(y)=y$. Since the velocity field is spatially homogeneous, we need only study the law $\mu_t= \mathrm{Law}(R_t({r}))$ of the separation between particles $R_t({r})=X_t(x)-X_t({y})$ with $r=x-y$. Using the spatial homogeneity of the velocity field $u$, one can check that $\mu_t$ is a solution of the following PDE
\begin{equation}
\partial_t \mu_t = \left(D(0)-D(r)\right):\nabla_r \otimes \nabla_r \mu_t \, ,
\label{2pointFP}
\end{equation}
with $\mu_0=\delta_r$. Given the above characterisation of $\mu_t$, we have the following result.
\begin{prop}
Assume there exists a $G$ as in the statement of Lemma~\ref{keylem}. Then, we have 
\begin{equation}
\int_{\mathbb{R}^d} G(\bar r) \, \mathrm{d}\mu_t(\bar r) = \e^{-\lambda t} G(r) \, .
\end{equation}
\end{prop}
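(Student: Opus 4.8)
The plan is to imitate the proof of Lemma~\ref{keylem}, replacing the It\^o calculus for $\theta_t^\kappa$ by a plain duality argument on the Fokker--Planck equation \eqref{2pointFP}. The structural ingredient I would isolate first is that the operator $L:=\big(D(0)-D(r)\big):\nabla_r\otimes\nabla_r$ is, up to a sign, formally self-adjoint on $L^2(\mathbb{R}^d)$. Indeed, since $D$ is homogeneous and divergence-free in each index with $D^{ij}=D^{ji}$, one has $\partial_{r_i}\big(D(0)-D(r)\big)^{ij}=\partial_{r_j}\big(D(0)-D(r)\big)^{ij}=0$, so $L$ is in double-divergence form,
\[
L\psi=\partial_i\partial_j\Big[\big(D(0)-D(r)\big)^{ij}\psi\Big],
\]
whence $\int_{\mathbb{R}^d}(L\psi)\,\varphi\,\rmd r=\int_{\mathbb{R}^d}\big(D(0)-D(r)\big):\big(\nabla\otimes\nabla\varphi\big)\,\psi\,\rmd r=\int_{\mathbb{R}^d}\psi\,(L\varphi)\,\rmd r$ for sufficiently regular and decaying $\psi,\varphi$. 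This is exactly the analogue of the integrations by parts (using $\div\sigma^{(n)}=0$) carried out in Lemma~\ref{keylem}.

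Granting this, I would set $\phi(t):=\int_{\mathbb{R}^d}G(\bar r)\,\rmd\mu_t(\bar r)$ and compute, using \eqref{2pointFP}, the self-adjointness of $L$, and the defining identity \eqref{Geqn} for $G$,
\[
\frac{\rmd}{\rmd t}\phi(t)=\int_{\mathbb{R}^d}(L\mu_t)\,G\,\rmd r=\int_{\mathbb{R}^d}\mu_t\,(LG)\,\rmd r=-\lambda\int_{\mathbb{R}^d}\mu_t\,G\,\rmd r=-\lambda\,\phi(t).
\]
Solving this linear ODE with $\mu_0=\delta_r$, so that $\phi(0)=G(r)$, gives $\phi(t)=\e^{-\lambda t}G(r)$, which is the claim. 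An equivalent route, even closer to Lemma~\ref{keylem}, is probabilistic: the separation $R_t(r)$ is a Markov process whose generator is exactly $L$, since its quadratic-variation matrix is $\sum_n\big(\sigma^{(n)}(x)-\sigma^{(n)}(y)\big)\otimes\big(\sigma^{(n)}(x)-\sigma^{(n)}(y)\big)=2\big(D(0)-D(r)\big)$ by \eqref{dform} and the evenness $D(-r)=D(r)$; applying It\^o's formula to $G(R_t(r))$ and taking expectations yields $\frac{\rmd}{\rmd t}\mathbb{E}\,G(R_t(r))=-\lambda\,\mathbb{E}\,G(R_t(r))$, and $\int_{\mathbb{R}^d}G\,\rmd\mu_t=\mathbb{E}\,G(R_t(r))$.

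The main obstacle will be making the two integrations by parts rigorous: $\mu_0$ is a Dirac mass, and in the case of interest (Lemma~\ref{isolem}) $G=I_s(|\cdot|)$ is singular at the origin and decays only polynomially, so the pairings and boundary terms need justification (for a general $G$ one must additionally impose the mild growth/regularity that makes the pairings finite, which the Riesz potential meets). I would handle this in three steps. First, for each fixed $t>0$ the operator $L$ is uniformly elliptic on $\{|\bar r|\ge\delta\}$ with smooth, polynomially growing coefficients, so parabolic regularity gives $\mu_t$ a smooth density decaying faster than any polynomial, which annihilates the boundary contributions at infinity, while near the origin $G\in L^1_{\mathrm{loc}}$ because $d-2s<d$. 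Second, I would establish the identity first on $t\in[\varepsilon,T]$ by testing \eqref{2pointFP} against a cutoff $G\,\chi_R(1-\chi_\delta)$ and passing $R\to\infty$, then $\delta\to0$, controlling the error terms by the rapid decay of $\mu_t$ and the integrability of $G,\nabla G,\nabla^2G$ against $\mu_t$ at positive times. Third, I would send $\varepsilon\to0^+$ via the representation $\phi(t)=\mathbb{E}\,G(R_t(r))$, using the a.s.\ continuity $R_t(r)\to r\neq0$ and a uniform-integrability bound on $G(R_t(r))$ near $t=0$ (obtainable from the identity already proved on $[\varepsilon,T]$, or from a direct moment estimate on $|R_t(r)|^{-(d-2s)}$), to conclude $\phi(0^+)=G(r)$. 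In the probabilistic formulation the same difficulty reappears as the need to localize by the stopping times $\tau_\delta=\inf\{t\ge0:|R_t(r)|\le\delta\}$ before invoking It\^o's formula, and then to remove the localization.
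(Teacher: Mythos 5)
Your argument is exactly the paper's: test \eqref{2pointFP} with $G$, integrate by parts twice using that $D(0)-D(r)$ is divergence-free in each index, invoke \eqref{Geqn}, and solve the resulting ODE with $\phi(0)=G(r)$. The additional material on the probabilistic formulation and on justifying the integrations by parts near the Dirac initial datum and the singularity of $G$ is sound but goes beyond the paper's one-line proof; the core approach is the same.
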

\begin{proof}
The proof follows directly by testing~\eqref{2pointFP} with $G$, integrating by parts using the fact that $D$ is divergence-free, and then using the definition of $G$.
\end{proof}
If $D$ is self-similarly isotropic, in view of Lemma \ref{isolem}, this means that for any $s\in (0,d/2)$, we have
\be
\mathbb{E}[|X_t(x) - X_t(y)|^{2s-d}] = \e^{-\lambda_{d,s} t }|x-y|^{2s-d}, \qquad \lambda_{d,s}/D_1 = 2s(d-2s).
\ee
Thus, we obtain exact decay laws for (inverse) particle dispersion, capturing the average dispersion.  In the case $s=0$, this relation becomes the statistical conservation law  $\mathbb{E}[|X_t(x) - X_t(y)|^{-d}] =|x-y|^{-d}$ derived in the works \cite{falkovich2013single,zel1984kinematic}. This fact can be understood as akin to the statement that in a given, \emph{linear, isotropic and incompressible} velocity field, the average $\fint_{S^{d-1}} |X_t(x) - X_t(x+r)|^{-d}\rmd \omega(\hat{r})$ is a constant of motion. See the work of Frishman et al \cite{frishman2015statistical}.

\section{Top Lyapunov exponent in the Kraichnan model}
In this section, we present explicitly computed the Lyapunov exponents of the Lagrangian flow associated to~\eqref{thetaeq}. This expression for the Lyapunov exponents for first derived by Le Jan in~\cite{LJ85} in his study of  isotropic Brownian flows. For completeness, we present this calculation for the leading exponent below.
\begin{prop}
Consider the Lagrangian flow $\varphi_t(x)$ associated to~\eqref{thetaeq}, i.e. the following Stratonovich SDE
\begin{equation}\label{floweqn}
\rmd \varphi_t(x) = \sum_{n=0}^\infty \sigma^{(n)}(\varphi_t(x))\circ \rmd W_t^{(n)}\, ,
\end{equation}
with $\varphi_t(x)=x$. Furthermore, assume that $D$ is self-similarly isotropic in the sense of Definition~\ref{def:self-similar}. Then
\begin{equation}\label{lyapexp}
\log (\lVert D \varphi_t \rVert) = d D_1  t +  M_t ,
\end{equation}
where $M_t$ is the martingale defined by \eqref{Mt}.  The top Lyapunov exponent, defined almost surely, is 
\be
\lambda_1 =\lim_{t\to\infty} \frac{\log (\lVert D \varphi_t \rVert) }{t}= d D_1.
\label{eq:Lyapunov}
\ee
\end{prop}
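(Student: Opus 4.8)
The plan is to read $\log\|D\varphi_t\|$ off the linearized flow. Differentiating \eqref{floweqn} in $x$, the Jacobian cocycle $J_t:=D\varphi_t(x)$ (which satisfies $\det J_t\equiv1$, each $\sigma^{(n)}$ being divergence-free) solves the linear Stratonovich SDE
\[
\rmd J_t=\sum_{n=0}^{\infty}(\nabla\sigma^{(n)})(\varphi_t)\,J_t\circ\rmd W_t^{(n)},\qquad J_0=I,\qquad (\nabla\sigma^{(n)})_{ij}:=\partial_j\sigma^{(n)}_i.
\]
Fix a deterministic unit vector $v$, put $v_t:=J_tv$ and $\hat v_t:=v_t/|v_t|$, and track $\log|v_t|$; this already exhibits the growth rate for a fixed direction, and the operator-norm statement \eqref{lyapexp}--\eqref{eq:Lyapunov} will follow at the very end.

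The computation rests on two facts. First, \emph{the It\^{o} form of the $J_t$-equation has no drift}: the Stratonovich correction is $\tfrac12\sum_n\big[(\partial_l\nabla\sigma^{(n)})(\varphi_t)\sigma^{(n)}_l(\varphi_t)+(\nabla\sigma^{(n)})(\varphi_t)^2\big]J_t$, and upon rewriting each sum via \eqref{dform} as an $r$-derivative of $D$ at the origin, both collapse to a derivative of $\sum_l\partial_{r_l}D^{il}$, which vanishes since $D$ is divergence-free in each index; hence $v_t$ is a (square-integrable) martingale. Second, It\^{o}'s formula for $\log|v_t|=\tfrac12\log|v_t|^2$ gives
\[
\rmd\log|v_t|=\tfrac12\sum_{n=0}^{\infty}\Big[\,\big|(\nabla\sigma^{(n)})(\varphi_t)\hat v_t\big|^2-2\big(\hat v_t^{\top}(\nabla\sigma^{(n)})(\varphi_t)\hat v_t\big)^2\Big]\rmd t+\rmd M_t,
\]
with $M_t:=\sum_n\int_0^t\hat v_\tau^{\top}(\nabla\sigma^{(n)})(\varphi_\tau)\hat v_\tau\,\rmd W_\tau^{(n)}$, the martingale in \eqref{Mt}. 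Now insert Definition \ref{def:self-similar}: the one-point velocity-gradient covariance along the trajectory is
\[
\sum_{n}\partial_k\sigma^{(n)}_a(\varphi_t)\,\partial_l\sigma^{(n)}_b(\varphi_t)=D_1\Big[\tfrac{2(d+1)}{d-1}\,\delta_{ab}\delta_{kl}-\tfrac{2}{d-1}\big(\delta_{al}\delta_{bk}+\delta_{ak}\delta_{bl}\big)\Big],
\]
and contracting against $\hat v_t$ yields, \emph{for every realization and every unit vector}, $\sum_n|(\nabla\sigma^{(n)})(\varphi_t)\hat v_t|^2=2(d+2)D_1$ and $\sum_n(\hat v_t^{\top}(\nabla\sigma^{(n)})(\varphi_t)\hat v_t)^2=2D_1$. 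Hence the drift above is the constant $\tfrac12\big[2(d+2)D_1-4D_1\big]=dD_1$, and at the same time $\rmd\langle M\rangle_t=2D_1\,\rmd t$ is deterministic — so $M_t$ is a genuine $L^2$-martingale, a linearly time-changed Brownian motion.

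Integrating gives the pathwise identity $\log|D\varphi_t(x)v|=dD_1t+M_t$, which is \eqref{lyapexp}. Since $\langle M\rangle_t=2D_1t$, the law of large numbers for Brownian motion forces $M_t/t\to0$ almost surely, so $\lim_{t\to\infty}t^{-1}\log|D\varphi_t(x)v|=dD_1$. Running this with $v$ over an orthonormal basis $e_1,\dots,e_d$ gives $\|D\varphi_t\|_{\mathrm{HS}}^2=\e^{2dD_1t}\sum_i\e^{2M_t^{(i)}}$, where $M_t^{(i)}$ is the martingale produced by the identity for $v=e_i$ and hence $M_t^{(i)}/t\to0$; therefore $t^{-1}\log\|D\varphi_t\|_{\mathrm{HS}}\to dD_1$, and the comparison $\|\cdot\|_{\mathrm{op}}\le\|\cdot\|_{\mathrm{HS}}\le\sqrt d\,\|\cdot\|_{\mathrm{op}}$ gives $\lambda_1=dD_1$. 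I expect the main obstacle to be purely the bookkeeping: executing the Stratonovich--It\^{o} conversion for $J_t$ correctly — the two cancellations genuinely use incompressibility — and then carrying out the index contractions of $\nabla_r\otimes\nabla_r D$ at the origin in the isotropic form. The one conceptual point to watch is that the clean identity lives along a single fixed tangent direction; identifying its rate with the top exponent $\lambda_1$, rather than with a smaller Oseledets exponent, is handled above via the Hilbert--Schmidt comparison.
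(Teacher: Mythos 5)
Your proposal is correct and follows essentially the same route as the paper: linearize the flow, apply It\^o's formula to $\log|D\varphi_t\,v|$ for a fixed unit vector $v$, and use the isotropic covariance to get the constant drift $dD_1$ and quadratic variation $2D_1\,t$, whence $M_t/t\to 0$ and $\lambda_1=dD_1$. Your extra care — spelling out the two incompressibility cancellations in the Stratonovich--It\^o conversion, and passing from the fixed-direction identity to the operator norm via the Hilbert--Schmidt comparison — only makes explicit steps the paper leaves implicit.
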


\begin{proof}
Let us define by $A_{t,x}:= (D\varphi_t)(x)$. It is straightforward to check that
\begin{equation}\label{Aeqn}
\rmd A_{t,x} = \sum_{n=0}^\infty (D \sigma^{(n)})(\varphi_t(x))\cdot A_{t,x}  \rmd W_t^{(n)}\,. 
\end{equation}
Indeed, since $\sigma^{(n)}$ are divergence-free, the It\^o  and Stratonovich form of SDE \eqref{floweqn} coincide.
Pick $v \in \mathbb{S}^{d-1}$ and define $v_{t,x}:= A_{t,x}v$. We then have that
\begin{equation}
\rmd v_{t,x} =  \sum_{n=0}^\infty (D \sigma^{(n)})(\varphi_t(x))\cdot v_{t,x} \, \rmd W_t^{(n)} \,.
\end{equation}
Applying It\^o's formula, we obtain
\begin{align}
\rmd  |v_{t,x}|^2 = \sum_{n=0}^\infty |(D \sigma^{(n)})(\varphi_t(x))\cdot v_{t,x}|^2 \, dt + 2 \sum_{n=0}^\infty\langle v_{t,x}, (D \sigma^{(n)})(\varphi_t(x))\cdot v_{t,x} \rangle \, \rmd W_t^{(n)} \, .
\label{eq:intermediateIto1}
\end{align}
Note that the drift term in the above expression can be simplified as follows
\begin{align}\nonumber
\sum_{n=0}^\infty |(D \sigma^{(n)})(\varphi_t(x))\cdot v_{t,x}|^2 =& \sum_{n=0}^\infty\sum_{i=1}^d \left|\sum_{j=1}^d \partial_{j} \sigma^{(n)}_i v_{t,x}^j\right|^2 =  \sum_{n=0}^\infty \sum_{i=1}^d\sum_{j,k=1}^d \partial_{j} \sigma^{(n)}_i \partial_{k} \sigma^{(n)}_i v_{t,x}^j v_{t,x}^k \\
=& \langle v_{t,x}, C \cdot v_{t,x} \rangle \, ,
\end{align}
with the matrix $C$ given by
$
C_{jk}(x) :=\sum_{i=1}^d \left.\partial_{x_{j}}\partial_{x_{k}'}D^{ii}(x-x') \right|_{x=x'} \, .
$
Using the expression for $D$ given by~\eqref{asymexpD2}, we obtain
$
\partial_{x_{j}}\partial_{x_{k}'}D^{ii}(x-x') = 2D_1 \delta_{jk}\left(\frac{d+1}{d-1} -\frac{2}{d-1}\delta_{ik} \right) \, .
$
Using the above,~\eqref{eq:intermediateIto1} reduces to 
\begin{align}
\rmd |v_{t,x}|^2 = 2D_1(d+2)|v_{t,x}|^2 \, dt + 2 \sum_{n=0}^\infty\langle v_{t,x}, (D \sigma^{(n)})(\varphi_t(x))\cdot v_{t,x} \rangle \, \rmd W_t^{(n)} \, .
\end{align}
Applying It\^o's formula to the above expression again, we arrive at
\begin{align}
\rmd \log |v_{t_x}| = &\frac12 \rmd\log |v_{t,x}|^2= D_1(d+2) \, dt  - \frac{1}{ |v_{t,x}|^4} \sum_{n=0}^\infty \left|\langle v_{t,x}, (D \sigma^{(n)})(\varphi_t(x))\cdot v_{t,x} \rangle\right| ^2 \, dt +dM_t \, ,
\label{eq:intermediateIto2}
\end{align}
where the martingale term $M_t$ is given by
\begin{equation}\label{Mt}
M_t =\int_0^t\frac{1}{|v_{t,x}|^2}  \sum_{n=0}^\infty\langle v_{t,x}, (D \sigma^{(n)})(\varphi_t(x))\cdot v_{t,x} \rangle \, \rmd W_t^{(n)} \, .
\end{equation}
We first simplify the drift term in~\eqref{eq:intermediateIto2} as follows
\begin{align}\nonumber
\sum_{n=0}^\infty\left|\langle v_{t,x}, (D \sigma^{(n)})(\varphi_t(x))\cdot v_{t,x} \rangle\right|^2 &=\sum_{n=0}^\infty \left|  \sum_{j,k=1^d} v_{t,x}^k \partial_{j}\sigma^{(n)}_k v_{t,x}^j  \right|^2 = \sum_{n=0}^\infty \sum_{j,k,\ell,m=1}^d  v_{t,x}^k v_{t,x}^\ell \partial_{j}\sigma^{(n)}_k \partial_{\ell}\sigma^{(n)}_m v_{t,x}^j v_{t,x}^m \, .
\end{align}
Note now that
$
\sum_{n=0}^\infty\partial_{j}\sigma^{(n)}_k \partial_{\ell}\sigma^{(n)}_m = \left.\partial_{x_j}\partial_{x_\ell'}D^{k m}(x-x' )\right|_{x=x'}\, .$
Using~\eqref{asymexpD2} again, we obtain
\begin{align}
\partial_{x_j}\partial_{x_\ell'}D^{k m}(x-x' ) =& 2D_1\left(\delta_{k m} \delta_{j \ell } \left(\frac{d+1}{d-1} \right) -\frac{1}{d-1}(\delta_{kj}\delta_{m \ell} + \delta_{m j }\delta_{k \ell}) \right) \, .
\end{align}
From~\eqref{eq:intermediateIto2}, we obtain
$
\rmd \log |v_{t_x}|  = D_1 d \, \rmd t + \rmd M_t \, ,
$
whence \eqref{lyapexp} follows. For~\eqref{eq:Lyapunov}, we note that
\begin{align}
\mathbb{E}(M_t^2) = \sum_{n=0}^\infty \int_0^t \mathbb{E}\left( \frac{1}{|v_{t,x}|^4}  |\langle v_{t,x}, (D \sigma^{(n)})(\varphi_t(x))\cdot v_{t,x} \rangle|^2\right) \, \rmd t = 2D_1 t.
\end{align}
It follows then that $t^{-1} M_t$ is a supermartingale, and so by Doob's martingale convergence theorem and the above bound, it converges almost surely to $0$. This clearly implies~\eqref{eq:Lyapunov}, since for any $v \in \mathbb{S}^{d-1}$, we have
\[
\lim_{t\to \infty}\frac{1}{t}\log |v_{t_x}| = D_1 d\, .
\]
This completes the proof.
\end{proof}

\noindent \textbf{Acknowledgements.} We thank G. Eyink, A. Frishman and S. Punshon-Smith for useful discussions.
The research of MCZ was partially supported by the Royal Society URF\textbackslash R1\textbackslash 191492 and
EPSRC Horizon Europe Guarantee EP/X020886/1.
The research of TDD was partially supported by the NSF
DMS-2106233 grant and  NSF CAREER award \#2235395. 
The research of RSG was partially supported by the Deutsche Forschungsgemeinschaft through the SPP 2410/1 \emph{Hyperbolic Balance Laws in Fluid Mechanics: Complexity, Scales, Randomness}.

\bibliographystyle{abbrv}
\bibliography{Kraichnan_biblio.bib}

\end{document}